\newtheorem{thm}{Theorem}[section]
\newtheorem{lem}[thm]{Lemma}
\newtheorem{cor}[thm]{Corollary}
\newcommand{\GL}{\operatorname{GL}}
\newcommand{\SL}{\operatorname{SL}}
\begin{document}

\title[Average Dimension]
{Average Dimension of Fixed Point Spaces With Applications}
\author{Robert M. Guralnick}
\address{Department of Mathematics, University of Southern California,
Los Angeles, CA 90089-2532, USA}
\email{guralnic@usc.edu}
\author{Attila Mar\'oti}
\address{MTA Alfr\'ed R\'enyi Institute of Mathematics, Re\'altanoda utca 13-15, H-1053, Budapest, Hungary}
\email{maroti@renyi.hu} \keywords{linear representation, fixed
point space, centralizer, chief factor, BFC group, primitive
character} \subjclass[2000]{20C99, 20F24, (20C20, 20D06, 20D10,
20D30)}
\thanks{The first author was partially supported by NSF
grant DMS 0653873. The research of the second
author was supported by a Marie Curie International Reintegration
Grant within the 7th European Community Framework Programme and
partially by grants OTKA T049841 and OTKA NK72523.}
\date{\today}

\begin{abstract}
Let $G$ be a finite group, $F$ a field, and $V$ a finite
dimensional $FG$-module such that $G$ has no trivial composition
factor on $V$. Then the arithmetic average dimension of the fixed
point spaces of elements of $G$ on $V$ is at most $(1/p) \dim V$
where $p$ is the smallest prime divisor of the order of $G$. This
answers and generalizes a 1966 conjecture of Neumann which also
appeared in a paper of Neumann and Vaughan-Lee and also as a
problem in The Kourovka Notebook posted by Vaughan-Lee. Our result
also generalizes a recent theorem of Isaacs, Keller,
Meierfrankenfeld, and Moret\'o. Various applications are given.
For example, another conjecture of Neumann and Vaughan-Lee is
proven and some results of Segal and Shalev are improved and/or
generalized concerning BFC groups.
\end{abstract}
\maketitle

\begin{center}
{\it Dedicated to Peter M. Neumann on the occasion of his 70th
birthday.}
\end{center}

\bigskip

\section{Introduction}

Let $G$ be a finite group, $F$ a field, and $V$ a finite
dimensional $FG$-module. For a non-empty subset $S$ of $G$ we
define
$$
\mathrm{avgdim}(S,V) = \frac{1}{|S|} \sum_{s \in S} \dim C_V(s)
$$
to be the arithmetic average dimension of the fixed point spaces
of all elements of $S$ on $V$. In his 1966 DPhil thesis Neumann
\cite{N} conjectured that if $V$ is an irreducible $FG$-module
then $\mathrm{avgdim}(G,V) \leq (1/2) \dim V$. This problem was
restated in 1977 by Neumann and Vaughan-Lee \cite{nv} and was
posted in 1982 by Vaughan-Lee in The Kourovka Notebook
\cite{Kourovka} as Problem 8.5. The conjecture was proved by
Neumann and Vaughan-Lee \cite{nv} for solvable groups $G$ and also
in the case when $|G|$ is invertible in $F$. Later Segal and
Shalev \cite{SS} showed that $\mathrm{avgdim}(G,V) \leq (3/4) \dim
V$ for an arbitrary finite group $G$. This was improved by Isaacs,
Keller, Meierfrankenfeld, and Moret\'o \cite{IKMM} to
$\mathrm{avgdim}(G,V) \leq ((p+1)/2p) \dim V$ where $p$ is the
smallest prime factor of $|G|$. Our first main theorem is

\begin{thm}
\label{t1} Let $G$ be a finite group, $F$ a field, and $V$ a
finite dimensional $FG$-module. Let $N$ be a normal subgroup of
$G$ that has no trivial composition factor on $V$. Then
$\mathrm{avgdim}(Ng,V) \leq (1/p) \dim V$ for every $g \in G$
where $p$ is the smallest prime factor of the order of $G$.
\end{thm}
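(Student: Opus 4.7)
The strategy is a sequence of standard reductions followed by a case analysis on a minimal normal subgroup. First, extend scalars to the algebraic closure of $F$; this preserves the hypothesis (the trivial module stays trivial) and leaves $\dim C_V(h)$ and $\dim V$ unchanged. Next, induct on $\dim V$ using a composition series of $V$ as an $FG$-module: for a short exact sequence $0 \to U \to V \to W \to 0$ of $FG$-modules, sub-additivity $\dim C_V(h) \leq \dim C_U(h) + \dim C_W(h)$ (from left-exactness of $\ker(h-1)$), combined with the fact that $U$ and $W$ inherit the no-trivial-$N$-composition-factor hypothesis, lets me average over $h \in Ng$ and split the problem. Hence I may assume $V$ is irreducible as an $FG$-module. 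By Clifford's theorem, $V|_N$ is then semisimple, and its simple summands form a single $G$-orbit of non-trivial simple $FN$-modules.

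Next, pick a minimal normal subgroup $K$ of $G$ contained in $N$; by hypothesis $K$ acts non-trivially on $V$. Then $K$ is either elementary abelian of some prime exponent $q$, or a direct product of isomorphic non-abelian simple groups. In the abelian case, decompose $V = \bigoplus_\chi V_\chi$ into eigenspaces for the characters $\chi \in \widehat{K}$, none of which is trivial. The group $G$ permutes these characters by conjugation, and an element $h \in Ng$ contributes to $\dim C_V(h)$ only through characters $\chi$ with $\chi^h = \chi$. A careful averaging, together with the fact that non-trivial $G$-orbits on $\widehat{K}$ have length at least $p$, should yield the bound $(1/p)\dim V$; the subtle subcase is when $K \leq Z(G)$ acts via a central character, where one exploits that to pass to $G/K$ acting on a related module of smaller dimension.

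The main obstacle, and almost certainly the technical heart of the argument, is the non-abelian case where $K = T^m$ for a non-abelian simple $T$. After further Clifford-type reductions using the tensor-product structure of $V|_K$, the problem reduces to sharp estimates on $\dim C_V(g)$ for elements $g$ of a quasisimple group acting on an irreducible module. Such estimates depend on the classification of finite simple groups and a case-by-case analysis for alternating, classical, and exceptional Lie-type groups. A secondary difficulty is the coset structure itself: averaging over $Ng$ rather than a subgroup blocks direct use of the projection operator $|N|^{-1}\sum_{n \in N} n$, so the argument has to be set up inside $N\langle g \rangle$ and the $g$-twist tracked through every reduction.
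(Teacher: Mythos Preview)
Your reductions (algebraic closure, composition series, Clifford, choice of a minimal normal subgroup $K\le N$) are exactly those the paper makes, and your treatment of the abelian case when $K$ is \emph{non}-central is essentially the paper's argument: with $G=\langle K,g\rangle$ and $V$ irreducible, the non-trivial $K$-eigenspaces form a single $\langle g\rangle$-orbit of length $t\ge p$, every $h\in Kg$ permutes them by the same $t$-cycle, and hence $\dim C_V(h)\le (\dim V)/t$ for each such $h$.

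Two places diverge from the paper, one minor and one substantial.

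\textbf{The central abelian subcase.} Your suggestion to ``pass to $G/K$ on a related module of smaller dimension'' does not work: here $K$ acts on the irreducible $V$ by a non-trivial scalar, so there is no $G/K$-module in sight, and $\dim V$ does not drop. The paper's resolution is immediate once you have also reduced to $G=\langle K,g\rangle$: then $K\le Z(G)$ forces $G$ abelian, so $\dim V=1$, and at most one element of the coset $Kg$ acts trivially, giving average $\le 1/|K|\le 1/p$.

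\textbf{The non-abelian case.} This is the real gap. You anticipate a case-by-case analysis yielding ``sharp estimates on $\dim C_V(g)$'' for quasisimple groups. Pointwise bounds of the form $\dim C_V(g)\le \tfrac12\dim V$ for all non-trivial $g$ are \emph{false} in general (e.g.\ a $3$-cycle in $A_n$ on the deleted permutation module has fixed space of dimension $n-3>\tfrac12(n-1)$ for $n\ge 7$), so any such route must itself be an averaging argument carried out family by family, which is far from routine. The paper bypasses this entirely with a uniform device you have not identified: Scott's lemma applied to triples $(y,s,(ys)^{-1})$. One fixes $s\in L_1\le E$ so that the set $Y=\{y\in gE:\langle y,s\rangle=G\}$ has size $>|E|/2$ (this is the probabilistic-generation input from Breuer--Guralnick--Kantor, which is where CFSG enters). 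For $y\in Y$, Scott's inequality gives
\[
\dim C_V(s)+\dim C_V(y)+\dim C_V(ys)\le \dim V,
\]
while for $y\notin Y$ one has the trivial bound with $+\dim C_V(s)$ on the right. Since $y\mapsto ys$ is a bijection of $gE$, summing over $y\in gE$ yields $2\sum_{y\in gE}\dim C_V(y)<|E|\dim V$, i.e.\ $\mathrm{avgdim}(gE,V)<\tfrac12\dim V$. This single inequality replaces all of the module-by-module fixed-point analysis you were bracing for, and it is the idea you are missing.
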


Theorem \ref{t1} not only solves the above-mentioned conjecture of
Neumann and Vaughan-Lee but it also generalizes and improves the
problem in many ways. First of all, $G$ need not be irreducible on
$V$; the only restriction we impose is that $G$ has no trivial
composition factor on $V$. Secondly, we prove the bound $(1/2)
\dim V$ not just for $\mathrm{avgdim}(G,V)$ but for
$\mathrm{avgdim}(S,V)$ where $S$ is an arbitrary coset of a normal
subgroup of $G$ with a certain property. Thirdly, Theorem \ref{t1}
involves a better general bound, namely $(1/p) \dim V$ where $p$
is the smallest prime divisor of $|G|$. Note that the example
\cite[Page 3129]{IKMM} of a completely reducible $FG$-module $V$
for an elementary abelian $p$-group $G$ shows that
$\mathrm{avgdim}(G,V) = (1/p) \dim V$ can occur in Theorem
\ref{t1}. There are examples for equality in Theorem \ref{t1} even
when $V$ is an irreducible module. Just consider a non-trivial
irreducible representation of $G = C_{p}$ for $p$ an arbitrary
prime. Our other example works for arbitrarily large dimensions.
Let $p$ be an arbitrary odd prime, let $G$ be the extraspecial
$p$-group of order $p^{1+2a}$ (for a positive integer $a$) of
exponent $p$, let $N = Z(G)$, let $F$ be an algebraically closed
field of characteristic different from $p$, and let $V$ be an
irreducible $FG$-module of dimension $p^{a}$. Then for every
element $x \in G \setminus N$ we have $\dim C_{V}(x) = (1/p) \dim
V$ and so $\mathrm{avgdim}(Ng,V) = (1/p) \dim V$ for every $g \in
G$. In particular we have $\mathrm{avgdim}(H,V) = (1/p)\dim V$ for
every subgroup $H$ of $G$ containing $N$.

In his DPhil thesis \cite{N} Neumann showed that if $V$ is a
non-trivial irreducible $FG$-module for a field $F$ and a finite
solvable group $G$ then there exists an element of $G$ with small
fixed point space. Specifically, he showed that there exists $g
\in G$ with $\dim C_{V}(g) \leq (7/18) \dim V$. Neumann
conjectured that in fact, there should exists $g \in G$ such that
$\dim C_{V}(g) \leq (1/3) \dim V$. Segal and Shalev \cite{SS}
proved, for an arbitrary finite group $G$, that there exists an
element $g \in G$ with $\dim C_{V}(g) \leq (1/2) \dim V$. Later,
under milder conditions ($V$ is a completely reducible $FG$-module
with $C_{V}(G) = 0$), Isaacs, Keller, Meierfrankenfeld, and
Moret\'o \cite{IKMM} showed that there exists an element $g \in G$
with $\dim C_{V}(g) \leq (1/p) \dim (V)$ where $p$ is the smallest
prime divisor of $|G|$. Under even weaker conditions we improve
this latter result.

\begin{cor}
\label{c1} Let $G$ be a finite group, $F$ a field, and $V$ a
finite dimensional $FG$-module. Let $N$ be a normal subgroup of
$G$ that has no trivial composition factor on $V$. Let $x$ be an
element of $G$ and let $p$ be the smallest prime factor of the
order of $G$. Then there exists an element $g \in Nx$ with $\dim
C_{V}(g) \leq (1/p) \dim V$ and there exists an element $g \in N$
with $\dim C_{V}(g) < (1/p) \dim V$.
\end{cor}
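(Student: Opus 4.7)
The plan is to deduce both parts of Corollary~\ref{c1} from Theorem~\ref{t1} by elementary averaging arguments.

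For the first claim, I would apply Theorem~\ref{t1} directly to the coset $Nx$. The theorem gives $\mathrm{avgdim}(Nx, V) \le (1/p)\dim V$, and by definition this quantity is the arithmetic mean over $g \in Nx$ of the non-negative integers $\dim C_V(g)$. Since at least one term of an arithmetic mean is no larger than the mean itself, there must exist some $g \in Nx$ with $\dim C_V(g) \le (1/p)\dim V$.

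For the strict inequality in the second claim, I would specialize to the coset $Nx = N$ (taking $x = 1$) and exploit that the identity element of $N$ has the largest possible fixed space. We may assume $V \neq 0$, since otherwise the hypothesis forces $\dim V = 0$ and no element can give a strict inequality (the statement is vacuous). Then $\dim C_V(1) = \dim V > (1/p)\dim V$ because $p \ge 2$. Suppose, for contradiction, that every $g \in N$ satisfies $\dim C_V(g) \ge (1/p)\dim V$. Separating out the contribution of the identity from the sum defining $\mathrm{avgdim}(N,V)$ yields
$$\mathrm{avgdim}(N, V) \ge \frac{1}{|N|}\left(\dim V + (|N|-1)\cdot \frac{1}{p}\dim V\right) = \frac{1}{p}\dim V + \frac{p-1}{p\,|N|}\dim V > \frac{1}{p}\dim V,$$
contradicting Theorem~\ref{t1}. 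Hence some $g \in N$ must satisfy $\dim C_V(g) < (1/p)\dim V$, as required.

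I do not anticipate any real obstacle: once Theorem~\ref{t1} is available, both assertions are immediate pigeonhole consequences. The only mildly delicate point is isolating the identity element when deriving the \emph{strict} inequality in the second claim, which is what forces some other element of $N$ to lie strictly below the average.
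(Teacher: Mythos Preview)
Your proposal is correct and takes essentially the same approach as the paper, which simply remarks that Corollary~\ref{c1} ``follows directly from Theorem~\ref{t1} just by noticing that $\dim C_V(1) = \dim V$.'' Your write-up merely spells out the averaging argument that this sentence encodes. (One tiny quibble: the $V=0$ case is not ``vacuous'' but rather degenerate---the strict inequality fails there---so the implicit standing assumption is $V\neq 0$, just as $|G|>1$ is implicit for $p$ to be defined.)
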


Note that Corollary \ref{c1} follows directly from Theorem
\ref{t1} just by noticing that $\dim C_{V}(1) = \dim V$. Note also
that if $V$ is irreducible and faithful in Corollary \ref{c1} then
no non-trivial normal subgroup of $G$ has a non-zero fixed point
on $V$ and so the $N$ above can be any non-trivial normal subgroup
of $G$. During the last stage of the writing of this paper
Neumann's above-mentioned conjecture was proved in \cite{gm}; if
$V$ is a non-trivial irreducible $FG$-module for a finite group
$G$ then there exists an element $g \in G$ such that $\dim
C_{V}(g) \leq (1/3) \dim V$.

Let $\mathrm{cl}_{G}(g)$ denote the conjugacy class of an element
$g$ in a finite group $G$, and for a positive integer $n$ and a
prime $p$ let $n_{p}$ denote the $p$-part of $n$. In \cite{IKMM}
Isaacs, Keller, Meierfrankenfeld, and Moret\'o conjecture that for
any primitive complex irreducible character $\chi$ of a finite
group $G$ the degree of $\chi$ divides $|\mathrm{cl}_{G}(g)|$ for
some element $g$ of $G$. Using their result mentioned before the
statement of Corollary \ref{c1} they showed that if $\chi$ is an
arbitrary primitive complex irreducible character of a finite
solvable group $G$ and $p$ is a prime divisor of $|G|$ then
$\chi(1)_{p}$ divides ${(|\mathrm{cl}_{G}(g)|)}^{3}$ for some $g
\in G$. Using Theorem \ref{t1} we may prove more than this.

\begin{cor}
\label{c5} Let $\chi$ be an arbitrary primitive complex
irreducible character of a finite solvable group $G$ and let $p$
be a prime divisor of $|G|$. Then the number of $g \in G$ for
which $\chi(1)_{p}$ divides ${(|\mathrm{cl}_{G}(g)|)}^{3}$ is at
least $(2|G|)/(1+k)$ where $k = \log_{p} |G|_{p}$. Furthermore if
$\chi(1)_{p} > 1$ then there exists a $p'$-element $g \in G$ for
which $p^{3} \cdot \chi(1)_{p}$ divides
${(|\mathrm{cl}_{G}(g)|)}^{3}$.
\end{cor}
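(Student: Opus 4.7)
The plan is to apply Theorem \ref{t1} to the canonical elementary abelian $p$-section $V := L/K$ given by the structure theory of primitive characters, and then convert the resulting dimension bound into a $p$-adic divisibility for $|\mathrm{cl}_G(g)|$. Assume $\chi(1)_p = p^m$ with $m \ge 1$ (the case $m=0$ is vacuous). By the standard reduction for primitive characters of solvable groups (as in \cite{IKMM}, Section~3), there exist normal subgroups $K \le L$ of $G$ with $V$ elementary abelian of order $p^{2m}$, with $G/L$ embedded faithfully in $\Sp(V) \cong \Sp_{2m}(p)$, and with $V$ carrying no trivial $G$-composition factor; the non-degeneracy of the commutator symplectic form on $V$ forces $p \mid |K|$, so $k := \log_p |G|_p \ge 2m+1$.

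Write $d(h) = \dim_{\mathbb{F}_p} C_V(hL)$. Because $L/K$ is abelian, the commutator $l \mapsto [h,l]K$ defines an $\mathbb{F}_p$-linear map $L \to V$ with image of dimension $2m - d(h)$, so $p^{2m-d(h)}$ divides $[L:C_L(h)]$; normality of $L$ in $G$ partitions $\mathrm{cl}_G(h)$ into $L$-orbits of uniform size $[L:C_L(h)]$, giving $v_p(|\mathrm{cl}_G(h)|) \ge 2m - d(h)$. Hence $d(h) \le \lfloor 5m/3 \rfloor$ implies $\chi(1)_p \mid |\mathrm{cl}_G(h)|^3$. Applying Theorem \ref{t1} with $N = G$ gives $\sum_{h \in G} d(h) \le |G|\cdot 2m/p_0$, where $p_0$ is the smallest prime divisor of $|G|$; Markov's inequality with $r = \lfloor 5m/3 \rfloor$ yields
\begin{equation*}
|\{h \in G : d(h) \le r\}| \;\ge\; |G|\cdot\frac{p_0(r+1) - 2m}{p_0(r+1)},
\end{equation*}
and an elementary rearrangement using $r+1 \ge 5m/3$ and $k \ge 2m+1$ shows this is at least $2|G|/(1+k)$. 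The estimate is essentially tight in borderline cases such as $G = \SL_2(3)$ with $p_0 = p = 2$, $m = 1$, $k = 3$, giving exactly $|G|/2$.

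For the \emph{furthermore} assertion we seek a $p'$-element $g \in G$ with $d(gL) \le \lfloor 5m/3 \rfloor - 1$, which will give $v_p(|\mathrm{cl}_G(g)|) \ge \lceil m/3 \rceil + 1$ and hence $p^{m+3} \mid |\mathrm{cl}_G(g)|^3$. Because $V$ is non-zero and has no trivial $G/L$-composition factor, $G/L$ is not a $p$-group, so a non-identity $p'$-element $\bar g \in G/L$ exists; Schur--Zassenhaus (applicable since $L$ is a $p$-group in the standard reduction) lifts it to a $p'$-element of $G$. To control $d(g)$ one applies Theorem \ref{t1} to a Hall $p'$-subgroup $H$ of $G$ acting on $V$ — the hypothesis $V^H = 0$ must be checked or arranged from the structure — yielding a $p'$-element with sufficiently small $d$. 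The principal obstacle is verifying the no-trivial-composition-factor condition for the relevant subgroup across all structural configurations; in the smallest cases (for example $m = 1$, $p = 2$) one additionally exploits that the identity is the unique element of $H$ acting trivially on $V$, so that the averaged inequality forces strict inequality on the non-identity elements and produces the required $p'$-element.
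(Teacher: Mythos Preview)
Your route through the IKMM symplectic section $V=L/K$ of dimension $2m$ and the commutator-map estimate $v_p(|\mathrm{cl}_G(h)|)\ge 2m-d(h)$ is exactly the paper's. For the first assertion the paper carries out the same counting via Theorem~\ref{t1}, only with the cleaner threshold $d(g)\le m=(1/2)\dim V$ in place of your $\lfloor 5m/3\rfloor$: since $d(g)\le m$ already gives $\chi(1)_p\mid|\mathrm{cl}_G(g)|$ outright, Markov at this threshold yields $|G|/(m+1)=2|G|/(2+\dim V)\ge 2|G|/(1+k)$ in one line, without the rearrangement you sketch. (Your $\SL_2(3)$ remark is also slightly off: the actual count there is $16$, not $|G|/2=12$; the latter is only the Markov lower bound.)

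The second assertion is where your proposal is genuinely incomplete, and you correctly flag the gap yourself: to apply Theorem~\ref{t1} (more precisely Corollary~\ref{c1}) to a Hall $p'$-subgroup $H$ you must know that $H$ has no trivial composition factor on $V$, equivalently $C_V(H)=0$. You leave this as ``to be checked or arranged'' and fall back on ad hoc remarks for small $m$. The paper does not argue around this point. It records that in the IKMM setup $V$ is a completely reducible $G$-module with $C_V(G)=0$, notes that the same holds for $H$ (complete reducibility is Maschke, since $|H|$ is prime to $p$; the vanishing of fixed points is inherited from the construction in~\cite{IKMM}), and then applies Corollary~\ref{c1} directly to the $H$-module $V$. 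This produces $g\in H$---automatically a $p'$-element, so your Schur--Zassenhaus lifting is an unnecessary detour---with $\dim C_V(g)<(1/2)\dim V$; since $\dim V=2m$ is even this forces $d(g)\le m-1$, whence $v_p(|\mathrm{cl}_G(g)|)\ge m+1$ and $p^{3}\chi(1)_p\mid|\mathrm{cl}_G(g)|^3$. You should close the gap the same way, by invoking the specific properties of $V$ established in~\cite{IKMM}, rather than leaving the hypothesis on $H$ unresolved.
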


Recall that a chief factor of a finite group is a section $X/Y$ of
$G$ with $Y < X$ both normal in $G$ such that there is no normal
subgroup of $G$ strictly between $X$ and $Y$. Note that $X/Y$ is a
direct product of isomorphic simple groups. If $X/Y$ is abelian,
then it is an irreducible $G$-module. If $X/Y$ is non-abelian,
then $G$ permutes the direct factors transitively. A chief factor
is called central if $G$ acts trivially on $X/Y$ and non-central
otherwise. Let $G$ be a finite group acting on another finite
group $Z$ by conjugation. For a non-empty subset $S$ of $G$ define
$$
\mathrm{geom}(S,Z) = {\Bigg( \prod_{s \in S} |C_{Z}(s)|
\Bigg)}^{1/|S|}
$$
to be the geometric mean of the sizes of the centralizers of
elements of $S$ acting on $Z$. Similarly, for a non-empty subset
$S$ of $G$ define
$$
\mathrm{avg}(S,Z) = \frac{1}{|S|} \sum_{s \in S} |C_{Z}(s)|
$$
to be the arithmetic mean of the sizes of the centralizers of
elements of $S$ acting on $Z$. Our next result is a non-abelian
version of Theorem \ref{t1} proved using some recent work of
Fulman and the first author \cite{FG}.

\begin{thm}
\label{t2} Let $G$ be a finite group with $X/Y = M$ a non-abelian
chief factor of $G$ with $X$ and $Y$ normal subgroups in $G$.
Then, for any $g \in G$, we find that $\mathrm{geom}(Xg,M) \leq
\mathrm{avg}(Xg,M) \leq {|M|}^{1/2}$.
\end{thm}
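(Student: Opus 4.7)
The first inequality $\mathrm{geom}(Xg,M)\le\mathrm{avg}(Xg,M)$ is immediate from the arithmetic-geometric mean inequality applied to the positive reals $|C_M(s)|$, $s\in Xg$. The real content is the upper bound ${|M|}^{1/2}$, and my plan is to rewrite $\mathrm{avg}(Xg,M)$ as a count of orbits of a coset action on $M$ and then invoke the work of Fulman and Guralnick \cite{FG}.

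Since $X$ and $Y$ are both normal in $G$ with $Y\trianglelefteq X$, the commutator identity $[Y,X]\subseteq Y$ shows that $Y$ acts trivially on $M=X/Y$ by conjugation. Hence the conjugation action of $X$ on $M$ factors through $X/Y=M$ and realizes $\mathrm{Inn}(M)$, while $g$ induces some $\sigma\in\Aut(M)$. For $x\in X$ with image $\bar x\in M$, the element $xg$ therefore acts on $M$ as the automorphism $\bar x\sigma\in\Aut(M)$, so $|C_M(xg)|=|C_M(\bar x\sigma)|$. Since each $m\in M$ has exactly $|Y|$ preimages in $X$, averaging gives
$$\mathrm{avg}(Xg,M)=\frac{1}{|M|}\sum_{m\in M}|C_M(m\sigma)|.$$
A standard double-counting argument (Cauchy-Frobenius applied to $M$ acting by conjugation on the coset $M\sigma\subset M\langle\sigma\rangle$) identifies the right-hand side with the number of $M$-orbits on $M\sigma$, equivalently the number of $\sigma$-twisted conjugacy classes of $M$.

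The remaining task is to bound this twisted class count by $|M|^{1/2}$. Because $M$ is a non-abelian chief factor of $G$, we have $M\cong T^k$ for some non-abelian finite simple group $T$, and $\sigma$ permutes the $k$ simple direct factors while acting by automorphisms on each. Partitioning the factors into $\sigma$-orbits factors the twisted class count as a product $\prod_O k(T,\tau_O)$ over orbits $O$, where each $\tau_O\in\Aut(T)$ is the composite automorphism obtained by reading around the orbit; the theorem thus reduces to the single-factor estimate $k(T,\tau)\le|T|^{1/2}$ for every $\tau\in\Aut(T)$, which is the content of \cite{FG}. I expect the main obstacle to be confirming that this bound is available in the full generality required -- in particular for outer (graph, field, graph-field) automorphisms of simple groups of Lie type, where a case-by-case inspection through the classification is needed -- rather than the reduction itself, which is essentially formal.
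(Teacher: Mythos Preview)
Your proposal is correct and follows essentially the same path as the paper: pass to $G/Y$, identify $\mathrm{avg}(Xg,M)$ with the number of $M$-conjugacy classes in the coset $M\sigma$ via Cauchy--Frobenius, and invoke \cite{FG} for the simple case. The one minor difference is the treatment of the non-simple case: the paper first replaces $G$ by $\langle M,g\rangle$, so that $\sigma$ is transitive on the $t$ simple factors, and then for $t>1$ uses the crude pointwise bound $|C_M(h)|\le|M|^{1/t}$ for every $h\in gM$; you instead factor the twisted class count over the $\sigma$-orbits on the factors. Both reductions are routine, and the paper's specific citations---\cite[Lemma~2.1]{FG} for $k(T,\tau)\le k(T)$ and \cite[Proposition~5.3]{FG} for $k(T)\le|T|^{0.41}$---combine to give exactly the single-factor estimate $k(T,\tau)\le|T|^{1/2}$ you isolate.
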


In fact, one can do slightly better than $1/2$ in the exponent of
the statement of Theorem \ref{t2}, but it is a bit easier to write
down the proof of this result. It is easy to see that one cannot
do better than $1/3$ (consider $G=\SL(2,q)$ with $q = 2^e > 2$ --
then all non-trivial elements have centralizers of orders  $q-1$,
$q$, or $q+1$ which are approximately $|G|^{1/3}$).

Let $\mathrm{ccf}(G)$ and $\mathrm{ncf}(G)$ be the product of the
orders of all central and non-central chief factors (respectively)
of a finite group $G$. (In case these are not defined put them
equal to $1$.) These invariants are independent of the choice of
the chief series of $G$.  Let $F(G)$ denote the Fitting subgroup
of $G$.   Note that $F(G)$ acts trivially on every chief factor of $G$.
Using Theorems \ref{t1} and \ref{t2} we
prove

\begin{thm}
\label{t3} Let $G$ be a finite group. Then $\mathrm{geom}(G,G)
\leq \mathrm{ccf}(G) \cdot {(\mathrm{ncf}(G))}^{1/p}$ where $p$ is
the smallest prime factor of the order of $G/F(G)$.
\end{thm}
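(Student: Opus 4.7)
The strategy is to fix a chief series $1 = G_0 \vartriangleleft G_1 \vartriangleleft \cdots \vartriangleleft G_n = G$ and bound $|C_G(g)|$ factor by factor. For each $i$ the natural map $C_{G_i}(g)/C_{G_{i-1}}(g) \hookrightarrow C_{G_i/G_{i-1}}(g)$ is injective, so $|C_G(g)| \le \prod_{i=1}^{n} |C_{G_i/G_{i-1}}(g)|$. Multiplying over $g \in G$ and extracting the $|G|$-th root gives
\[
\mathrm{geom}(G,G) \le \prod_{i=1}^{n} \mathrm{geom}(G,\, G_i/G_{i-1}),
\]
so the task reduces to bounding $\mathrm{geom}(G, M)$ for each individual chief factor $M = G_i/G_{i-1}$.

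I would then split into three cases. If $M$ is central, then $C_M(g) = M$ for every $g$ and $\mathrm{geom}(G, M) = |M|$; these contributions collect into the factor $\mathrm{ccf}(G)$. If $M$ is non-central and abelian, then $M$ is an irreducible $\mathbb{F}_r G$-module for some prime $r$ on which $G$ acts non-trivially. The action factors through the faithful quotient $\overline{G} := G/C_G(M)$, and $M$ is a non-trivial irreducible $\overline{G}$-module and hence has no trivial $\overline{G}$-composition factor, so Theorem \ref{t1} applied with group $\overline{G}$ and normal subgroup $N = \overline{G}$ yields $\mathrm{avgdim}(\overline{G}, M) \le (1/\overline{p})\dim M$, where $\overline{p}$ is the smallest prime divisor of $|\overline{G}|$. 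Since $F(G)$ centralizes every chief factor, $F(G) \le C_G(M)$; hence $\overline{G}$ is a quotient of $G/F(G)$ and $\overline{p} \ge p$. Because $C_G(M)$ acts trivially on $M$, the averages over $G$ and $\overline{G}$ coincide, and we conclude $\mathrm{geom}(G, M) \le |M|^{1/\overline{p}} \le |M|^{1/p}$.

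For a non-abelian chief factor $M = X/Y$ I would partition $G$ into cosets of $X$ and apply Theorem \ref{t2} to each:
\[
\prod_{g \in G} |C_M(g)| = \prod_{Xh \in G/X} \mathrm{geom}(Xh, M)^{|X|} \le |M|^{|G|/2},
\]
so $\mathrm{geom}(G, M) \le |M|^{1/2}$. The mere existence of a non-abelian chief factor makes $G$ non-solvable; since $F(G)$ is solvable, $G/F(G)$ is non-solvable too, forcing $|G/F(G)|$ to be even and $p = 2$, so $|M|^{1/2} = |M|^{1/p}$ in this case. Multiplying the three kinds of bounds over the chief series gives $\mathrm{geom}(G,G) \le \mathrm{ccf}(G) \cdot \mathrm{ncf}(G)^{1/p}$.

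The only real subtlety is arranging the correct prime: a naive invocation of Theorem \ref{t1} produces the smallest prime of $|G|$, not of $|G/F(G)|$. Passing to the quotient $G/C_G(M)$, combined with the fact that $F(G)$ centralizes every chief factor, is exactly what sharpens the estimate; on the non-abelian side the coincidence $p = 2$ is automatic, and Theorem \ref{t2} already delivers the required exponent.
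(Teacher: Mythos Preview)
Your argument is correct and follows essentially the same route as the paper: bound $|C_G(g)|$ by the product of $|C_M(g)|$ over the chief factors, handle central factors trivially, invoke Theorem~\ref{t1} (after passing to $G/C_G(M)$, using $F(G)\le C_G(M)$) for non-central abelian factors, and invoke Theorem~\ref{t2} coset by coset together with the Feit--Thompson theorem (your ``$G/F(G)$ non-solvable $\Rightarrow$ $p=2$'' step) for non-abelian factors. Your write-up is in fact a bit more explicit than the paper's in justifying the prime $p$ of $G/F(G)$ rather than of $G$.
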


By taking the reciprocals of both sides of the inequality of
Theorem \ref{t3} and multiplying by $|G|$, we obtain the following
result.

\begin{cor}
\label{c2} Let $G$ be a finite group. Then $\mathrm{ncf}(G) \leq
{\Big( \prod_{g \in G} |\mathrm{cl}_{G}(g)| \Big)}^{p/((p-1)|G|)}$
where $p$ is the smallest prime factor of the order of $G/F(G)$.
\end{cor}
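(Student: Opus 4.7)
The plan is to obtain Corollary \ref{c2} as a purely formal rearrangement of the inequality in Theorem \ref{t3}; no new ideas are needed beyond what is already encoded there. First I would translate the geometric mean of centralizer sizes into a geometric mean of class sizes using $|C_G(g)| = |G|/|\mathrm{cl}_G(g)|$, which gives
$$
\mathrm{geom}(G,G) \;=\; \Biggl(\prod_{g\in G}|C_G(g)|\Biggr)^{\!1/|G|} \;=\; \frac{|G|}{\bigl(\prod_{g\in G}|\mathrm{cl}_G(g)|\bigr)^{1/|G|}}.
$$

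Next I would use the fact that for any chief series of $G$, the product of the orders of the chief factors equals $|G|$, and so
$$
|G| \;=\; \mathrm{ccf}(G)\cdot\mathrm{ncf}(G).
$$
Substituting both observations into the inequality $\mathrm{geom}(G,G) \leq \mathrm{ccf}(G)\cdot\mathrm{ncf}(G)^{1/p}$ of Theorem \ref{t3}, I cancel the factor $\mathrm{ccf}(G)$ on both sides. What remains is
$$
\mathrm{ncf}(G)^{(p-1)/p} \;\leq\; \Biggl(\prod_{g\in G}|\mathrm{cl}_G(g)|\Biggr)^{\!1/|G|}.
$$
Raising both sides to the power $p/(p-1)$ (which is positive, so preserves the inequality) yields precisely the stated bound.

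There is genuinely no obstacle in this step: the corollary is a one-line algebraic consequence of Theorem \ref{t3} together with the identity $|G| = \mathrm{ccf}(G)\cdot\mathrm{ncf}(G)$ and the standard orbit-stabilizer equality $|C_G(g)|\cdot|\mathrm{cl}_G(g)| = |G|$. All of the work, including the use of Theorem \ref{t1} and Theorem \ref{t2} to bound the contributions from central and non-central chief factors respectively, has already been carried out in the proof of Theorem \ref{t3}.
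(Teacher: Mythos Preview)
Your argument is correct and is exactly the paper's approach: the authors say the corollary follows ``by taking the reciprocals of both sides of the inequality of Theorem~\ref{t3} and multiplying by $|G|$,'' which is precisely the algebraic manipulation you spell out using $|G|=\mathrm{ccf}(G)\cdot\mathrm{ncf}(G)$ and $|C_G(g)|\cdot|\mathrm{cl}_G(g)|=|G|$. One small slip in your final commentary: inside the proof of Theorem~\ref{t3}, Theorems~\ref{t1} and~\ref{t2} handle the \emph{abelian} and \emph{non-abelian} non-central chief factors respectively, not the central and non-central ones (central factors contribute the full $|N|$ to $\mathrm{ccf}(G)$ trivially).
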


A group is said to be a BFC group if its conjugacy classes are
finite and of bounded size. A group $G$ is called an $n$-BFC group
if it is a BFC group and the least upper bound for the sizes of
the conjugacy classes of $G$ is $n$. One of B. H. Neumann's
discoveries was that in a BFC group the commutator subgroup $G'$
is finite \cite{N1}. One of the purposes of this paper is to give
an upper bound for $|G'|$ in terms of $n$ for an $n$-BFC group
$G$.   Note that $C_G(G')$ is a finite index nilpotent subgroup.
Thus, $F(G)$ is well defined for BFC groups.

If $G$ is a BFC group, then there is a finitely generated
subgroup $H$ with $H'=G'$ and $G=HC_G(G')=HF(G)$.  Then
$H$ has a finite index central torsionfree subgroup $N$.
Set $J=H/N$.  So $J' $ and $G'$  are $G$-isomorphic.   In particular,
 $\mathrm{ncf}(J) = \mathrm{ncf}(G)$.  Clearly,  $G/F(G) \cong J/F(J)$.
Thus, for the next result, it suffices to consider finite
groups. Our first main theorem on BFC groups follows from
Corollary \ref{c2} (by noticing that $|\mathrm{cl}_{G}(1)| = 1$
and that in that result, we may always assume the action is
faithful).

\begin{thm}
\label{t4} Let $G$ be an $n$-BFC group with $n > 1$. Then
$\mathrm{ncf}(G) < n^{p/(p-1)}  \le n^{2}$, where $p$ is the
smallest prime factor of the order of $G/F(G)$.
\end{thm}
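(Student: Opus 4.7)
The plan is to apply Corollary \ref{c2} essentially verbatim. The paragraph preceding the statement has already carried out the reduction: it produces a finite group $J = H/N$ with $\mathrm{ncf}(J) = \mathrm{ncf}(G)$ and $J/F(J) \cong G/F(G)$, so the prime $p$ is unchanged. One should also observe that conjugacy class sizes do not increase under the passage from $G$ to $H \le G$ and then to the quotient $J = H/N$ (for the latter, $C_H(h)/N$ embeds in $C_J(hN)$), so every conjugacy class of $J$ has size at most $n$. Thus it suffices to prove the bound for a finite $n$-BFC group.

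Assuming $G$ is finite, Corollary \ref{c2} gives
\[
\mathrm{ncf}(G) \le \left(\prod_{g \in G} |\mathrm{cl}_G(g)|\right)^{p/((p-1)|G|)}.
\]
The key observation is that the identity contributes a factor of $|\mathrm{cl}_G(1)| = 1$, while each of the remaining $|G|-1$ elements contributes at most $n$. Since $n > 1$ forces $G$ to be non-abelian and in particular $|G| \ge 2$, we obtain
\[
\prod_{g \in G} |\mathrm{cl}_G(g)| \le n^{|G|-1}.
\]
Plugging in and using $(|G|-1)/|G| < 1$ together with $n > 1$ yields
\[
\mathrm{ncf}(G) \le n^{(|G|-1)p/((p-1)|G|)} < n^{p/(p-1)}.
\]
The final inequality $n^{p/(p-1)} \le n^2$ is immediate from $p \ge 2$.

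There is essentially no obstacle here: all the substantive work is already absorbed into Corollary \ref{c2} (which rests on Theorems \ref{t1} and \ref{t2}) and into the BFC-to-finite reduction preceding the theorem. The only point demanding any care is preserving the strict inequality, which is why one isolates the single identity class to drop the exponent from $|G|/(|G|(p-1)/p)$ all the way down to $(|G|-1)/|G| \cdot p/(p-1)$ before invoking $n > 1$.
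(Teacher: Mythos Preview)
Your proof is correct and follows exactly the route sketched in the paper: reduce to a finite group via the paragraph preceding the theorem, apply Corollary~\ref{c2}, and exploit $|\mathrm{cl}_G(1)|=1$ to obtain the strict inequality. Your added remark that conjugacy class sizes can only shrink when passing to $H\le G$ and then to the quotient $J=H/N$ (since $N$ is central) makes explicit a point the paper leaves to the reader; the only slight imprecision is that $J$ need not literally be an $n$-BFC group (its maximal class size could be smaller than $n$), but your argument only uses that class sizes are $\le n$ and that $|J|\ge 2$, the latter following from $J'\cong G'\neq 1$.
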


Theorem \ref{t4} solves \cite[Conjecture A]{nv}.

Not long after B. H. Neumann's proof that the commutator subgroup
$G'$ of a BFC group is finite, Wiegold \cite{W} produced a bound
for $|G'|$ for an $n$-BFC group $G$ in terms of $n$ and
conjectured that $|G'| \leq n^{(1/2)(1+ \log n)}$ where the
logarithm is to base $2$. Later Macdonald \cite{M} showed that
$|G'| \leq n^{6n{(\log n)}^{3}}$ and Vaughan-Lee \cite{V} proved
Wiegold's conjecture for nilpotent groups. For solvable groups the
best bound to date is $|G'| \leq n^{(1/2)(5 + \log n)}$ obtained
by Neumann and Vaughan-Lee \cite{nv}. In the same paper they
showed that for an arbitrary $n$-BFC group $G$ we have $|G'| \leq
n^{(1/2)(3 + 5 \log n)}$. Using the Classification of Finite
Simple Groups (CFSG) Cartwright \cite{C} improved this bound to
$|G'| \leq n^{(1/2)(41 + \log n)}$ which was later further
sharpened by Segal and Shalev \cite{SS} who obtained $|G'| \leq
n^{(1/2)(13 + \log n)}$. Applying Theorem \ref{t4} at the bottom
of \cite[Page 511]{SS} we arrive at a further improvement of the
general bound on the order of the derived subgroup of an $n$-BFC
group.

\begin{thm}
\label{t5} Let $G$ be an $n$-BFC group with $n > 1$. Then $|G'| <
n^{(1/2)(7+ \log n)}$.
\end{thm}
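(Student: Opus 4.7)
The plan is to follow the approach of Segal and Shalev in \cite{SS}, substituting the sharper bound on $\mathrm{ncf}(G)$ given by Theorem \ref{t4} at a single key step. First, the reduction outlined just before Theorem \ref{t4} allows us to replace the $n$-BFC group $G$ by a finite quotient $J$ satisfying $J' \cong G'$ as $G$-modules and $\mathrm{ncf}(J) = \mathrm{ncf}(G)$, while $J$ is itself (at worst) $n$-BFC. So it suffices to prove the inequality for finite groups $G$.

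In \cite{SS} the authors obtain the bound $|G'| \leq n^{(1/2)(13 + \log n)}$ by a chief-series analysis that splits the contribution of $G'$ into its central and non-central chief factor parts. The $(\log n)/2$ term in the exponent is the nilpotent (central) contribution; it arises as in Vaughan-Lee's nilpotent bound $n^{(1/2)(1+\log n)}$ and is insensitive to any sharpening of the estimate for $\mathrm{ncf}(G)$. The constant term $13/2$, on the other hand, comes from bounding the non-central part, and in particular from the bound on $\mathrm{ncf}(G)$ that Segal and Shalev use in the computation at the bottom of \cite[Page 511]{SS}.

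The substitution is then as follows: Theorem \ref{t4} provides $\mathrm{ncf}(G) < n^{p/(p-1)} \leq n^{2}$, where $p$ is the smallest prime divisor of $|G/F(G)|$. Inserting this in place of the weaker estimate for $\mathrm{ncf}(G)$ used by Segal and Shalev at the indicated point saves an overall factor of $n^{3}$ in the final product, which lowers the constant term in the exponent from $13/2$ to $7/2$ and yields $|G'| < n^{(1/2)(7 + \log n)}$ as claimed.

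The main obstacle is bookkeeping rather than structural: one must check that the exponent arithmetic works out exactly, that no other step of the Segal--Shalev estimate secretly relies on the previously available (weaker) bound on $\mathrm{ncf}(G)$, and that the initial reduction to the finite case really does preserve both $|G'|$ and the chief-factor invariants, which is guaranteed by the discussion preceding Theorem \ref{t4}.
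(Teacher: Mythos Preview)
Your proposal is correct and matches the paper's own argument: the paper's entire proof is the single remark, appearing just before the statement, that applying Theorem~\ref{t4} at the bottom of \cite[Page 511]{SS} yields the stated improvement. Your outline of the reduction to finite groups and the substitution of $\mathrm{ncf}(G)<n^{2}$ into the Segal--Shalev computation is exactly what is intended (one small imprecision: $J$ is a quotient of a finitely generated subgroup of $G$, not a quotient of $G$ itself, but this does not affect the argument).
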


A word $\omega$ is an element of a free group of finite rank. If
the expression for $\omega$ involves $k$ different indeterminates,
then for every group $G$, we obtain a function from $G^{k}$ to $G$
by substituting group elements for the indeterminates. Thus we can
consider the set $G_{\omega}$ of all values taken by this
function. The subgroup generated by $G_{\omega}$ is called the
verbal subgroup of $\omega$ in $G$ and is denoted by $\omega(G)$.
An outer commutator word is a word obtained by nesting commutators
but using always different indeterminates. In \cite{FM}
Fern\'andez-Alcober and Morigi proved that if $\omega$ is an outer
commutator word and $G$ is any group with $|G_{\omega}| = m$ for
some positive integer $m$ then $|\omega(G)| \leq {(m-1)}^{m-1}$.
They suspect that this bound can be improved to a bound close to
one obtainable for the commutator word $\omega = [x_{1},x_{2}]$.
By noticing that every conjugacy class of a group $G$ has size at
most the number of commutators of $G$ we see that Theorem \ref{t5}
yields

\begin{cor}
\label{c6} Let $G$ be a group with $m$ commutators for some
positive integer $m$ at least $2$. Then $|G'| < m^{(1/2)(7+ \log
m)}$.
\end{cor}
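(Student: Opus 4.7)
The plan is to deduce Corollary~\ref{c6} directly from Theorem~\ref{t5}. The key observation, already sketched in the text, is the following: for any fixed $x \in G$, the commutator map $\phi_x \colon y \mapsto [x,y] = x^{-1} x^y$ factors through the conjugation orbit map $y \mapsto x^y$, and since $x^y = x^{y'}$ if and only if $x^{-1} x^y = x^{-1} x^{y'}$, we obtain a bijection between the conjugacy class $\mathrm{cl}_G(x)$ and the set of commutators $\{[x,y] : y \in G\}$. As this latter set is contained in the set of all commutators of $G$, we conclude $|\mathrm{cl}_G(x)| \leq m$ for every $x \in G$.

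In particular, all conjugacy classes of $G$ are finite and of bounded size, so $G$ is an $n$-BFC group for some $n \leq m$. Since $m \geq 2$, there exist at least two distinct commutators, so $G$ is non-abelian, which forces at least one conjugacy class to have size $\geq 2$, giving $n \geq 2$. Thus Theorem~\ref{t5} applies and yields $|G'| < n^{(1/2)(7 + \log n)}$.

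To finish, we note that the function $f(t) = t^{(1/2)(7 + \log t)}$ is increasing on $[1, \infty)$: its logarithm equals $\frac{1}{2}\bigl(7 \log t + (\log t)^2\bigr)$, which is visibly increasing for $t \geq 1$. Hence
\[
|G'| < n^{(1/2)(7+\log n)} \leq m^{(1/2)(7+\log m)},
\]
as required.

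There is no substantive obstacle here; the corollary is essentially a repackaging of Theorem~\ref{t5}. The only content lies in the elementary bijection $\mathrm{cl}_G(x) \leftrightarrow \{[x,y] : y \in G\}$ used to bound class sizes by the number of commutators.
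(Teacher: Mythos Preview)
Your proof is correct and follows exactly the approach indicated in the paper: bound conjugacy class sizes by the number of commutators via the bijection $\mathrm{cl}_G(x)\leftrightarrow\{[x,y]:y\in G\}$, deduce that $G$ is $n$-BFC with $1<n\le m$, and apply Theorem~\ref{t5}. Your added verifications that $n>1$ and that $t\mapsto t^{(1/2)(7+\log t)}$ is increasing are the only details the paper leaves implicit.
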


Segal and Shalev \cite{SS} showed that if $G$ is an $n$-BFC group
with no non-trivial abelian normal subgroup then $|G| < n^{4}$. We
improve and generalize this result in Theorem \ref{t6}. For a
finite group $X$ let $k(X)$ denote the number of conjugacy classes
of $X$.

\begin{thm}
\label{t6} Let $G$ be an $n$-BFC group with $n > 1$. If the
Fitting subgroup $F(G)$ of $G$ is finite, then $|G| < n^{2}
k(F(G))$. In particular, if $G$ has no non-trivial abelian normal
subgroup then $|G| < n^{2}$.
\end{thm}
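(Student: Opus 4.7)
First, reduce to the case where $G$ is finite. Since $G$ is BFC, B.~H.~Neumann's theorem gives $[G:Z(G)]<\infty$; combined with $Z(G)\subseteq F(G)$ and $F(G)$ finite by hypothesis, this forces $Z(G)$ and hence $G$ itself to be finite.

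Next, by the class equation, each conjugacy class $C$ satisfies $|C|\le n$ and the identity gives a class of size $1$, so
\[
|G|\;=\;\sum_C |C|\;\le\;1+n\bigl(k(G)-1\bigr),
\]
whence $|G|<n\cdot k(G)$ strictly, because $n>1$. The desired bound $|G|<n^2 k(F(G))$ will therefore follow once we establish
\[
k(G)\;\le\;n\cdot k(F(G)).
\]

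To prove this, I would invoke Gallagher's inequality $k(G)\le k(F(G))\cdot k(G/F(G))$, reducing the problem to showing $k(\overline G)\le n$ where $\overline G:=G/F(G)$. The quotient $\overline G$ is itself $n$-BFC, and this bound should follow by applying Theorem~\ref{t4} to $\overline G$ together with the BFC counting inequality $|\overline G|\le n\cdot k(\overline G)$: the decomposition $|\overline G|=\mathrm{ccf}(\overline G)\cdot\mathrm{ncf}(\overline G)$, combined with $\mathrm{ncf}(\overline G)<n^{p/(p-1)}$ from Theorem~\ref{t4} and a careful accounting of the central chief factors of $\overline G$, should force $k(\overline G)\le n$. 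Chaining the inequalities then gives $|G|<n\cdot k(G)\le n^2 k(F(G))$, which is the first claim.

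The ``in particular'' claim follows immediately from the equivalence that $G$ has no non-trivial abelian normal subgroup if and only if $F(G)=1$: indeed $F(G)$ is nilpotent, and a non-trivial nilpotent normal subgroup has a non-trivial abelian characteristic subgroup (its center), which is then abelian and normal in $G$. When $F(G)=1$ one has $k(F(G))=1$, so the main bound specialises to $|G|<n^2$. The principal technical obstacle in the above strategy is establishing $k(\overline G)\le n$; this is the step in which the improved exponent in Theorem~\ref{t4} plays its essential role and where the chief-factor bookkeeping must be carried out most carefully.
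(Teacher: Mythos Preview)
Your reduction to the finite case is misargued: B.~H.~Neumann's theorem asserts only that $G'$ is finite for a BFC group, not that $[G:Z(G)]<\infty$. The latter is false in general (an infinite central product of non-abelian groups of order $p^{3}$ has $|G'|=p$ but $[G:Z(G)]=\infty$). The paper observes instead that $C_G(G')$ is a nilpotent subgroup of finite index, so $F(G)$ has finite index; together with $|F(G)|<\infty$ this gives $|G|<\infty$.

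The more serious problem is the step $k(\overline G)\le n$ for $\overline G=G/F(G)$, which you acknowledge is the crux but do not actually prove. Theorem~\ref{t4} controls only $\ncf(\overline G)$, and since $\overline G$ can have non-trivial Fitting subgroup (so $\mathrm{ccf}(\overline G)$ can be large) and there is no general inequality bounding $k(H)$ usefully in terms of $\ncf(H)$ and $\mathrm{ccf}(H)$, the suggested ``careful accounting of the central chief factors'' does not obviously close the gap. I see no way to extract $k(\overline G)\le n$ from Theorem~\ref{t4} alone.

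The paper bypasses all of this. It simply invokes \cite[Theorem~10\,(i)]{GR}, which states that for any finite group $G$ one has $|G|\le a^{2}\,k(F(G))$ where $a=|G|/k(G)$ is the \emph{average} conjugacy class size. Since the identity class has size $1<n$, one has $|G|=1+\sum_{C\ne\{1\}}|C|\le 1+(k(G)-1)n<n\,k(G)$, whence $a<n$, and the inequality $|G|<n^{2}k(F(G))$ follows immediately. Thus the paper's argument is a one-line deduction from an external result of Guralnick--Robinson rather than from Theorem~\ref{t4}, and your proposed route via Gallagher and Theorem~\ref{t4} is genuinely different---but, as it stands, incomplete.
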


Since $F(G)$ has finite index in $G$,
the hypotheses of Theorem \ref{t6} imply that $G$ is finite.
Note that
even more is true than Theorem \ref{t6}; if $G$ is a finite group
then $|G| \leq a^{2} k(F(G))$ where $a = |G|/k(G)$ is the
(arithmetic) average size of a conjugacy class in $G$ (this is
\cite[Theorem 10 (i)]{GR}). If $b$ denotes the maximal size of a
set of pairwise non-commuting elements in $G$ then, by Tur\'an's
theorem \cite{T} applied to the complement of the commuting graph
of $G$, we have $a < b+1$. Thus if $G$ is a finite group with no
non-trivial abelian normal subgroup then $|G| < {(b+1)}^{2}$. This
should be compared with the bound $|G| < c^{{(\log b)}^{3}}$
holding for some universal constant $c$ with $c \geq 2^{20}$ which
implicitly follows from \cite[Lemma 3.3 (ii)]{Pyber} and should
also be compared with the remark in \cite[Page 294]{Pyber} that
for a non-abelian finite simple group $G$ we have $|G| \leq 27
\cdot b^3$.



The final main result concerns $n$-BFC groups with a given number
of generators. Segal and Shalev \cite{SS} proved that in such
groups the order of the commutator subgroup is bounded by a
polynomial function of $n$. In particular they obtained the bound
$|G'| \leq n^{5d+4}$ for an arbitrary $n$-BFC group $G$ that can
be generated by $d$ elements. By applying Theorem \ref{t4} to
\cite[Page 515]{SS} we may improve this result.

\begin{cor}
\label{c3} Let $G$ be an $n$-BFC group that can be generated by
$d$ elements. Then $|G'| \leq n^{3d+2}$.
\end{cor}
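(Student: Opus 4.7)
The plan is to import Segal and Shalev's argument on \cite[Page 515]{SS} almost verbatim and then substitute the sharper estimate for $\mathrm{ncf}(G)$ supplied by Theorem \ref{t4}. Their derivation of $|G'|\le n^{5d+4}$ for a $d$-generated $n$-BFC group $G$ splits into two conceptually independent pieces: a structural computation that, for a $d$-generated BFC group, bounds $|G'|$ by a polynomial in $\mathrm{ncf}(G)$ and $n$, exploiting the fact that for each generator $x_i$ the commutator set $[x_i,G]$ is in bijection with the coset space of $C_G(x_i)$ and therefore has cardinality at most $n$; and the substitution of the then-available bound $\mathrm{ncf}(G)\le n^{4}$ at the last step.

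First I would reduce to the finite case via the construction recorded immediately before Theorem \ref{t4}: one produces a finite, $d$-generated group $J$ with $J'$ being $G$-isomorphic to $G'$ and with $\mathrm{ncf}(J)=\mathrm{ncf}(G)$, so we may assume $G$ is itself finite on generators $x_1,\ldots,x_d$. I would then replay the structural step of \cite[Page 515]{SS} to obtain an intermediate inequality of the form $|G'|\le \mathrm{ncf}(G)^{d+1}\cdot n^{d}$, in which the factor $n^{d}$ accounts for the $d$ conjugacy-class-sized sets $[x_i,G]$ that generate $G'$ and the factor $\mathrm{ncf}(G)^{d+1}$ accounts for the action of $G$ on a chief series (controlling the non-central portion that survives in $G'$ modulo the central part already absorbed by $F(G)$).

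Finally, inserting Theorem \ref{t4}, namely $\mathrm{ncf}(G)<n^{2}$, into this intermediate inequality yields $|G'|<n^{2(d+1)}\cdot n^{d}=n^{3d+2}$, with the trivial $n=1$ case handled separately. The only obstacle is verifying that the Segal--Shalev computation truly factors through an inequality of the form $\mathrm{ncf}(G)^{d+1}\cdot n^{d}$, rather than one in which their specific bound $\mathrm{ncf}(G)\le n^{4}$ is already intertwined with the derivation. Because \cite{SS} uses its estimate on $\mathrm{ncf}(G)$ as a black-box input applied only at the final line of the computation on page 515, the substitution is clean, no additional structural work is required, and the claimed $|G'|\le n^{3d+2}$ follows immediately.
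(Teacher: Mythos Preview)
Your proposal is correct and matches the paper's own argument exactly: the paper states only that Corollary~\ref{c3} follows ``by applying Theorem~\ref{t4} to \cite[Page 515]{SS},'' i.e., by re-running the Segal--Shalev computation and inserting $\mathrm{ncf}(G)<n^{2}$ in place of their $\mathrm{ncf}(G)\le n^{4}$ at the final step. Your reverse-engineered intermediate inequality $|G'|\le \mathrm{ncf}(G)^{d+1}\cdot n^{d}$ is consistent with both the old exponent $5d+4$ and the new $3d+2$, and your observation that the $\mathrm{ncf}$ bound enters Segal--Shalev's argument as a black box only at the end is precisely what makes the substitution clean.
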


Finally, the following immediate consequence of Corollary \ref{c3}
sharpens \cite[Corollary 1.5]{SS}.

\begin{cor}
\label{c4} Let $G$ be a $d$-generator group. Then $$|\{ [x,y] : x,
y \in G \}| \geq {|G'|}^{1/(3d+2)}.$$
\end{cor}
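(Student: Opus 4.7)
The plan is to obtain Corollary \ref{c4} as a direct consequence of Corollary \ref{c3}. Write $m = |\{[x,y] : x,y \in G\}|$ for the number of commutators in $G$. The one observation needed is that $m$ bounds every conjugacy class size: for any $g \in G$, the identity $x^{-1}gx = g \cdot [g,x]$ shows that the map $x \mapsto [g,x]$ from $G$ into the commutator set has fibers exactly the right cosets of $C_G(g)$, since $[g,x] = [g,y]$ is equivalent to $x^{-1}gx = y^{-1}gy$, i.e.\ to $yx^{-1} \in C_G(g)$. Hence $|\mathrm{cl}_G(g)| = |G : C_G(g)| = |\{[g,x] : x \in G\}| \leq m$.

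Once this is in hand there is essentially nothing left to do. If $m = 1$ then $G$ is abelian, so $G' = 1$ and the inequality $m \geq |G'|^{1/(3d+2)}$ is trivial. If $m \geq 2$, the previous paragraph shows $G$ is an $m$-BFC group, so Corollary \ref{c3} (applied with $n = m$) gives $|G'| \leq m^{3d+2}$, and extracting the $(3d+2)$-th root yields the claim.

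There is no serious obstacle here: the proof is essentially a rearrangement of Corollary \ref{c3} combined with the standard commutator-centralizer identity above, which is why the paper flags the statement as immediate. The same identity is what silently converts the BFC-style bound of Theorem \ref{t5} into the commutator-count bound of Corollary \ref{c6}, so this step is consistent with the pattern already used in the paper.
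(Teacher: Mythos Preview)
Your proof is correct and is exactly the argument the paper has in mind when it calls the result an ``immediate consequence of Corollary \ref{c3}'': bound conjugacy class sizes by the number of commutators via $x^{-1}gx = g[g,x]$ and then invoke Corollary \ref{c3}. The only cosmetic point is that $G$ is strictly an $n$-BFC group for some $n \leq m$ rather than an $m$-BFC group, but since the bound in Corollary \ref{c3} is monotone in $n$ this makes no difference.
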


The example $T_{m}(p)$ \cite[Page 213]{nv} shows that Theorem
\ref{t5}, Corollary \ref{c6}, Corollary \ref{c3}, and Corollary
\ref{c4} are close to best possible.

We point out that Theorem \ref{t1} for $p$ odd requires only the
Feit-Thompson Odd Order Theorem \cite{FT}. However, most of the
results in this paper depend on CFSG as do the results in
\cite{SS} and \cite{IKMM} (for groups of even order).

\section{Proof of Theorem \ref{t1}}

Our first lemma sharpens and generalizes \cite[Theorem 6.1]{nv}.

\begin{lem}
\label{lem2} Let $G$ be a finite group, $F$ a field, and $V$ a
finite dimensional $FG$-module. Let $N$ be an elementary abelian
normal subgroup of $G$ such that $C_{V}(N) = 0$. Then
$\mathrm{avgdim}(Ng,V) \leq (1/p) \dim V$ for every $g \in G$
where $p$ is the smallest prime factor of the order of $G$.
\end{lem}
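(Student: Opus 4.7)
The plan is to pass to an algebraically closed field, decompose $V$ into $N$-isotypic components, and then handle $\sum_{n \in N} \dim C_V(ng)$ one $\langle g \rangle$-orbit on $\hat N$ at a time; the crucial input is that the smallest prime $p$ dividing $|G|$ forces every orbit size to be either $1$ or at least $p$.

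First I would make two standard reductions. Extension of scalars to $\overline{F}$ affects neither $\dim C_V(h)$ for $h \in G$ nor the hypothesis $C_V(N) = 0$, so I may assume $F$ is algebraically closed. Let $q$ denote the prime with $N$ elementary abelian of exponent $q$; note $q \geq p$. If $\mathrm{char}(F) = q$, then the only simple $FN$-module is the trivial one, and any nonzero $FN$-module has a nonzero $N$-fixed vector, contradicting $C_V(N) = 0$; so $\mathrm{char}(F) \neq q$, $V$ is completely reducible over $FN$, and I can write $V = \bigoplus_{1 \neq \lambda \in \hat N} V_\lambda$ where $V_\lambda$ is the $\lambda$-isotypic component.

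Next I would group by $\langle g \rangle$-orbits on the nontrivial $\lambda$ with $V_\lambda \neq 0$. Since $g$ normalizes $N$ and carries $V_\lambda$ isomorphically onto $V_{\lambda^g}$, each orbit $\mathcal{O}$ determines a summand $W_\mathcal{O} = \bigoplus_{\lambda \in \mathcal{O}} V_\lambda$ that is stable under every element of $Ng$, so the entire sum splits orbit-by-orbit and it suffices to prove $\sum_{n \in N} \dim C_{W_\mathcal{O}}(ng) \leq (|N|/p) \dim W_\mathcal{O}$ for each $\mathcal{O}$. The orbit length $k = |\mathcal{O}|$ divides the order of $g$ and hence divides $|G|$, so $k = 1$ or $k \geq p$.

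If $k \geq p$, I would unwind the equation $(ng)v = v$ on $W_\mathcal{O}$: writing $v = \sum_i v_i$ with $v_i \in V_{\lambda_i}$ along the orbit, the relations $v_{i+1} = \lambda_{i+1}(n) \cdot g v_i$ propagate around the cycle, so any fixed vector is determined by its $V_{\lambda_0}$-component. Hence $\dim C_{W_\mathcal{O}}(ng) \leq \dim V_{\lambda_0} = \dim W_\mathcal{O}/k$, and summing over $n \in N$ gives a bound of $(|N|/k) \dim W_\mathcal{O} \leq (|N|/p) \dim W_\mathcal{O}$. If $k = 1$, then $W_\mathcal{O} = V_\lambda$ with $g V_\lambda = V_\lambda$, and $ng$ acts on $V_\lambda$ as the scalar $\lambda(n)$ times $g|_{V_\lambda}$; thus $\dim C_{V_\lambda}(ng)$ is the dimension of the $\lambda(n)^{-1}$-eigenspace of $g|_{V_\lambda}$. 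Since $\lambda$ is nontrivial on the elementary abelian $q$-group $N$, each $q$-th root of unity arises as $\lambda(n)^{-1}$ for exactly $|N|/q$ values of $n$, and the distinct eigenspaces of $g|_{V_\lambda}$ are disjoint, so $\sum_{n \in N} \dim C_{V_\lambda}(ng) \leq (|N|/q) \dim V_\lambda \leq (|N|/p) \dim W_\mathcal{O}$. The only real point of care is isolating this orbit-size dichotomy: once the isotypic decomposition is in place, both cases reduce to a single eigenspace count.
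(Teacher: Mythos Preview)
Your argument is correct, and it rests on the same two ideas as the paper's proof: the $N$-isotypic decomposition of $V$ (possible since $\mathrm{char}\,F\neq q$), together with the dichotomy that any $\langle g\rangle$-orbit on the nontrivial characters has length either $1$ or at least $p$. The organization differs, however. The paper argues by minimal counterexample: it first reduces to $G=\langle N,g\rangle$ with $V$ absolutely irreducible and $N$ faithful, and then treats the non-homogeneous case (your $k\ge p$) by the same permutation bound you give, while in the homogeneous case it pushes further to $G$ abelian and $\dim V=1$, where at most one element of $Ng$ can act trivially. Your direct treatment avoids the reduction to an irreducible module and handles the $k=1$ case by an eigenspace count on $g|_{V_\lambda}$, which is arguably cleaner and makes transparent why the bound is $1/q\le 1/p$ rather than merely $1/|N|$; the paper's route is shorter to write once the reductions are in place. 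Either way, the substance is the same.
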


\begin{proof}
Let us consider a counterexample with $|G|$ and $\dim V$ minimal.
It clearly suffices to assume that $G=\langle g, N \rangle$. We
may assume that $V$ is irreducible (since if we have the
inequality on each composition factor of $V$ we have it on $V$).
We may also assume that $V$ is absolutely irreducible. Finally, we
may assume that $N$ acts faithfully on $V$. If $N$ does not act
homogeneously on $V$, then $g$ transitively permutes the
components in an orbit of size $t \geq p$ and so every element in
$Ng$ has a fixed point space of dimension at most $(1/t) \dim V
\leq (1/p) \dim V$. So we may assume that the elementary abelian
group $N$ acts homogeneously on $V$. This means that it acts as
scalars on $V$. Thus $N \leq Z(G)$ and $G/Z(G)$ is cyclic. It
follows that $G$ is abelian and so $\dim V = 1$. At most $1$
element in the coset $Ng$ is the identity and so
$\mathrm{avgdim}(Ng,V) \leq (1/|N|)\dim V \leq (1/p) \dim V$. The
result follows.
\end{proof}

We first need a result about generation of finite groups. This is
an easy consequence of the proof of the main results of
\cite{BGK}.


\begin{thm}
\label{gen} Let $G$ be a finite group with a minimal normal subgroup $N =
L_1 \times \ldots \times L_t$ for some positive integer $t$ with
$L_i \cong L$ for all $i$ with $1 \leq i \leq t$ for a non-abelian
simple group $L$. Assume that $G/N = \langle xN \rangle$ for some
$x \in G$. Then there exists an element $s \in L_{1} \le N$ such
that $|\{g \in Nx : G = \langle g, s \rangle\}| > (1/2)|N|$.
\end{thm}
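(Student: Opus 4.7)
The plan is first to reduce the generation condition to a condition about the minimal normal subgroup $N$. For any $g \in Nx$, the image $gN = xN$ generates $G/N$, so $\langle g,s\rangle N = G$ automatically for every $s$, and consequently $\langle g,s\rangle = G$ if and only if $N \subseteq \langle g,s\rangle$. Equivalently, the ``bad'' set $B(s) := \{g \in Nx : \langle g,s\rangle \neq G\}$ is the union, over maximal subgroups $M < G$ with $MN = G$ and $s \in M$, of $M \cap Nx$. A short coset calculation (if $g_1,g_2 \in M \cap Nx$ then $g_1 g_2^{-1} \in M \cap N$) shows that when $M \cap Nx$ is non-empty it is a single coset of $M \cap N$ and so has size $|M \cap N|$. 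The task therefore becomes to choose $s \in L_1$ so that
\[
\sum_{\substack{M\text{ maximal in }G\\MN = G,\; s \in M}} |M \cap N| \;<\; \tfrac{1}{2}|N|.
\]

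Next I would classify the supplementing maximal subgroups $M$ via an O'Nan--Scott-type analysis adapted to the hypothesis that $G/N$ is cyclic, generated by $xN$. Minimality of $N$ forces $\langle x\rangle$ to permute the factors $L_1,\ldots,L_t$ transitively in a single cycle of length $t$. The possible intersections $M \cap N$ then fall into a small number of families: (i) stabilizers of a non-trivial $\langle x\rangle$-invariant block system of the $L_i$ (``product type''); (ii) full-diagonal-type subgroups of $N$; and (iii) ``almost-simple type'' where the projection of $M \cap N$ into some (equivalently each) $L_i$ is a proper subgroup of $L_i$. This is precisely the decomposition developed in \cite{BGK}.

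The heart of the argument---and the principal obstacle---is to pick a single $s \in L_1$ that simultaneously avoids almost all maximals of all three types. Following the technique of \cite{BGK}, I would select $s$ so that the permutation actions of $L$ on cosets of its maximal subgroups have small fixed-point ratios at $s$; by the CFSG-based estimates in \cite{BGK} this makes the proportion of $y \in L$ with $\langle s,y\rangle \neq L$ very small. These per-factor estimates lift to $G$: the contribution from type (iii) $M$'s is controlled directly by the projection to $L_1$; the contribution from type (ii) $M$'s uses the twisted action of $x$ on diagonal subgroups (here one must track how $g = nx$ acts via the $t$-cycle on the factors); and the contribution from type (i) is controlled through the block projections and the bound on the number of $\langle x\rangle$-invariant block systems. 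Summing these bounds across the bounded number of types and the finitely many conjugacy classes of maximal subgroups of $L$ yields the displayed inequality, and hence $|\{g \in Nx : \langle g,s\rangle = G\}| > \tfrac12 |N|$, as required.
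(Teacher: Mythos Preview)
Your outline is a plausible route, but it is not the paper's argument, and the paper's is considerably shorter.  The paper does \emph{not} classify the maximal supplements of $N$ in $G$.  Instead it first disposes of the case $t=1$ by invoking \cite[Theorem~1.4]{BGK} directly (with a handful of small exceptions treated by hand), and then reduces $t>1$ to $t=1$ by a single trick.  After conjugating so that $x=(u_1,1,\ldots,1)\sigma$ with $\sigma$ the $t$-cycle on coordinates, one defines $f\colon Nx\to \Aut(L_1)$ by sending $wx$ to the $L_1$-component of $(wx)^t$; a one-line computation gives $f(wx)=w_t w_{t-1}\cdots w_1 u_1\in L_1u_1$, and $f$ has constant fibre size.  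The $t=1$ case then supplies $s\in L_1$ with $\langle f(wx),s\rangle\ge L_1$ for more than half of the $wx$, and a short normal-closure argument shows that this forces $\langle wx,s\rangle=G$.  This bypasses any O'Nan--Scott analysis entirely.

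Your approach could in principle be pushed through, but as written it is only a sketch: you do not actually verify that the three families you list exhaust the maximal supplements of $N$ when $G/N$ is cyclic, nor do you carry out the summation for types (i) and (ii).  (Note, incidentally, that any nontrivial $s=(s',1,\ldots,1)$ already lies in no full-diagonal subgroup, so type (ii) is in fact vacuous; the real work in your scheme would be the product-across-blocks case.)  Since the paper's fibre argument reduces everything to a single citation of \cite{BGK} plus a paragraph, that is the argument you should present.
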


\begin{proof}
First suppose that $t =1$. This is an immediate consequence of
\cite[Theorem 1.4]{BGK} unless $G$ is one of $Sp(2n,2), n >2$,
$S_{2m+1}$ or $L= \Omega^+(8,2)$ or $A_6$.

If $G=Sp(2n,2), n>2$, then the result follows by \cite[Proposition 5.8]{BGK}. If
$G=S_{2m+1}$, then apply \cite[Proposition 6.8]{BGK}.

Suppose that $L=A_6$. Note that the overgroups of $s$ of order $5$ in $A_6$
are two subgroups isomorphic to $A_5$ (of different conjugacy classes) and the
normalizer of the Sylow $5$-subgroup generated by $s$. The result
follows trivially
from this observation.

Finally consider $L=\Omega^+(8,2)$. We take $s$ to be an element of order $15$.
It follows by the discussion in \cite[Section 4.1]{BGK} that given
$G$, there is an element of order $15$ satisfying the result (although it is
possible that the choice of $s$ depends on which $G$ occurs).

Now assume that $t > 1$. Write $x =(u_1, \ldots, u_t)\sigma$ where
$\sigma$ just cyclically permutes the coordinates of $N$ (sending $L_i$
to $L_{i+1}$ for $i < t$) and $u_i \in
\mathrm{Aut}(L_i)$. By conjugating by an element of the group
$\mathrm{Aut}(L_1) \times \ldots \times \mathrm{Aut}(L_t)$ we may
assume that $u_2 = \ldots = u_t=1$ (we do not need to do this but
it just makes the computations easier).

Let $f:Nx \rightarrow \mathrm{Aut}(L_1)$ be the map sending $wx$ to
the projection of $(wx)^t$ in $\mathrm{Aut}(L_1)$. Write $w=(w_1,
\ldots, w_{t})$
with $w_i \in L_i$. Then $f(wx) = w_tw_{t-1} \ldots w_1u_1$ is in $L_1u_1$.
Moreover, we see that every fiber of $f$ has the same size. By the case $t=1$,
we know that the probability that
$\langle f(wx), s \rangle = \langle L_1, u_1\rangle$ is greater than $1/2$.

We claim that if $L_1 \le \langle (f(wx), s \rangle$, then $G=\langle
wx, s \rangle$. The claim then implies the result. So assume that
$L_1 \le \langle (f(wx), s \rangle$ and set $H=\langle wx, s \rangle$.
Let $M  \le N$ be the normal closure of $s$ in $J: =\langle (wx)^t, s \rangle$.
This projects onto $L_1$ by assumption, but is also contained in $L_1$, whence
$M=L_1$. So $L_1 \le H$. Since any element of $Nx$ acts transitively on the
$L_i$, it follows that $N \le H$ and so $G=H$.
\end{proof}


The next result we need is Scott's Lemma \cite{scott}. See
\cite{neu} for a slightly easier proof of the result which depends
only on the rank plus nullity theorem in linear algebra.

\begin{lem}[Scott's Lemma]
\label{scott} Let $G$ be a subgroup of $\GL(V)$ with $V$ a finite
dimensional vector space. Suppose that $G = \langle g_1, \ldots,
g_r \rangle$ with $g_1 \cdots g_r=1$. Then
$$
\sum_{i=1}^r \dim [g_i, V] \ge  \dim V + \dim [G,V] - \dim C_V(G).
$$
\end{lem}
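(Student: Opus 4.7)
The plan is to establish the bound by rank-nullity, using a pair of linear maps between $V$ and the external direct sum $\bigoplus_{i=1}^{r}[g_i,V]$. Two identities will do all the work: first, the fact that $\sum_{i=1}^{r}[g_i,V] = [G,V]$ as subspaces of $V$, which follows from $G = \langle g_1, \ldots, g_r\rangle$; and second, the telescoping identity $1 - g_1 \cdots g_r = \sum_{i=1}^{r} g_1 \cdots g_{i-1}(1 - g_i)$, whose left side vanishes under the hypothesis $g_1 \cdots g_r = 1$.

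First I would introduce the ``sum'' map $\epsilon : \bigoplus_{i=1}^{r}[g_i,V] \to V$, $(w_1,\ldots,w_r) \mapsto \sum_{i}w_i$. Its image is exactly $\sum_{i}[g_i,V] = [G,V]$, so rank-nullity yields
\[
\dim \ker\epsilon = \sum_{i=1}^{r}\dim[g_i,V] - \dim[G,V].
\]
It then suffices to produce a subspace of $\ker\epsilon$ of dimension at least $\dim V - \dim C_V(G)$.

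Such a subspace comes from a ``telescope'' map $\mu : V \to \bigoplus_{i=1}^{r}[g_i,V]$ defined by $\mu(v) = \bigl((g_i-1)(g_1 g_2 \cdots g_{i-1} v)\bigr)_{i=1}^{r}$. Each coordinate is patently in $[g_i,V]$, and the telescoping identity together with $g_1\cdots g_r = 1$ forces the coordinates to sum to $0$, so $\im\mu \subseteq \ker\epsilon$. To compute $\ker\mu$, I would argue inductively on $i$: vanishing of the first coordinate yields $g_1 v = v$, after which the vanishing of each subsequent coordinate peels off one generator at a time to give $g_i v = v$. Hence $\ker\mu = C_V(G)$, $\dim\im\mu = \dim V - \dim C_V(G)$, and combining with the formula for $\dim\ker\epsilon$ gives exactly the stated inequality.

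The only step requiring care is the identity $\sum_{i}[g_i,V] = [G,V]$. The inclusion $\subseteq$ is immediate, but the reverse is not quite automatic, because $\sum_{i}[g_i,V]$ need not be visibly $G$-stable (conjugates of generators need not themselves be generators). The cleanest route, which I would use, is duality: in $V^{*}$ one has $C_{V^{*}}(g) = [g,V]^{\perp}$, and the obvious equality $C_{V^{*}}(G) = \bigcap_{i} C_{V^{*}}(g_i)$ (invariance of a functional need only be tested on the generators) translates under annihilators back into $\sum_{i}[g_i,V] = [G,V]$.
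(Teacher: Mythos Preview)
Your overall strategy---two linear maps between $V$ and $\bigoplus_i[g_i,V]$ together with rank--nullity---is precisely the approach the paper points to (it does not prove the lemma itself but cites \cite{scott} and the rank--nullity argument of \cite{neu}). Your duality argument for $\sum_i[g_i,V]=[G,V]$ is clean and correct.

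There is, however, a genuine error in the definition of $\mu$. The telescoping identity you invoke,
\[
1-g_1\cdots g_r=\sum_{i=1}^r g_1\cdots g_{i-1}(1-g_i),
\]
has the prefix $g_1\cdots g_{i-1}$ on the \emph{left} of $(1-g_i)$; but to force the $i$th coordinate into $[g_i,V]$ you instead wrote $\mu_i(v)=(g_i-1)(g_1\cdots g_{i-1}\,v)$, putting $(g_i-1)$ on the left. In a non-abelian setting these differ, and $\epsilon\circ\mu$ need not vanish: already for $r=3$ one computes
\[
\epsilon(\mu(v))=(g_2g_1-g_1g_2)\,v,
\]
which is generically nonzero. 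So $\im\mu$ is not contained in $\ker\epsilon$ as claimed.

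The repair is immediate. Use the companion identity
\[
g_1\cdots g_r-1=\sum_{i=1}^r (g_i-1)\,g_{i+1}\cdots g_r
\]
and set $\mu_i(v)=(g_i-1)(g_{i+1}\cdots g_r\,v)$. Each coordinate visibly lies in $[g_i,V]$, the sum telescopes to $(g_1\cdots g_r-1)v=0$, and your kernel computation goes through unchanged except that the induction starts at $i=r$ and works downward. With this correction the argument is complete and coincides with the proof the paper cites.
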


We will apply this in the case $r=3$. Noting that $\dim V = \dim
[x,V] + \dim C_V(x)$ for any $x$, we can restate this as:

$$
\sum_{i=1}^3  \dim C_V(g_i) \le \dim V + \dim C_V(G) + \dim
V/[G,V].
$$



\begin{thm}
\label{main2} Let $G$ be a finite group. Assume that $G$ has a
normal subgroup $E$ that is a central product of quasisimple
groups. Let $V$ be a finite dimensional $FG$-module for some field
$F$ such that $E$ has no trivial composition factor on $V$. If $g
\in G$, then $\mathrm{avgdim}(gE,V) \le (1/2) \dim V$.
\end{thm}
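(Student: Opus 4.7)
The plan is to reduce to the homogeneous case via Clifford theory, then apply Scott's Lemma together with Theorem~\ref{gen} and exploit the structure of the resulting triples through a pairing/orbit argument.

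First I would make standard reductions: extend scalars to assume $F$ is algebraically closed and pass to composition factors to assume $V$ is irreducible, since the hypothesis that $E$ has no trivial composition factor on $V$ is inherited. Applying Clifford's theorem, $V|_E$ decomposes into homogeneous components that $G$ permutes transitively. If the permutation induced by $g$ has some orbit of length $t \ge 2$, then every $x \in gE$ acts with the same cycle structure on components (since $E$ stabilises each component), and a block-permutation calculation yields $\dim C_V(x) \le (1/t)\dim V \le (1/2)\dim V$. Hence I may assume that $E$ acts homogeneously on $V$, and after a further reduction modulo $Z(E)$ that $\bar E := E/Z(E)$ is a single minimal normal subgroup of $\bar H := \langle g, E\rangle/Z(E)$, i.e.\ a direct product of copies of a non-abelian simple group.

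Next I would apply Theorem~\ref{gen} to $\bar H$ to obtain an element $s \in E$ and a subset $G_1 \subseteq gE$ with $|G_1| > |E|/2$ such that $\langle x, s\rangle Z(E) = \langle g, E\rangle$ for every $x \in G_1$. Because $V$ is irreducible and $Z(E)$ acts by scalars, this forces $C_V(\langle x, s\rangle) = 0$ and $V = [\langle x, s\rangle, V]$, so Scott's Lemma (Lemma~\ref{scott}) applied to the triple $(x, s, (xs)^{-1})$ gives
\[
\dim C_V(x) + \dim C_V(s) + \dim C_V(xs) \le \dim V \qquad (x \in G_1).
\]
The crucial structural observation is that $G_1$ is invariant under right-multiplication by $s$ (because $\langle xs, s\rangle = \langle x, s\rangle$), so $G_1$ breaks into $\langle s\rangle$-orbits; summing the Scott inequality around such an orbit doubles the contribution of $\dim C_V$ on that orbit and shows that the average of $\dim C_V$ over $G_1$ is at most $(1/2)(\dim V - \dim C_V(s)) \le (1/2)\dim V$.

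The main obstacle is extending this bound from $G_1$ to the whole coset $gE$: using the trivial bound on $gE \setminus G_1$ only yields an average of $(3/4)\dim V$. To close the gap I would either invoke the sharper density-one $2$-generation statements implicit in the proofs of~\cite{BGK} (making $|G_1|/|E|$ arbitrarily close to $1$, so that the bad contribution is absorbed into the non-negative slack term $\dim C_V(s)$), or refine the argument by varying $s$: for each bad $x$ one expects a different $s'(x) \in E$ with $\langle x, s'(x)\rangle Z(E) = \langle g, E\rangle$, and arranging the choice so that the map $x \mapsto x\,s'(x)$ is a bijection of $gE$ would yield the stated bound $\mathrm{avgdim}(gE, V) \le (1/2)\dim V$ by applying the same pairing argument globally.
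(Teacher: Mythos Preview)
Your reductions and the use of Scott's Lemma on the good set $G_1$ are essentially those of the paper, but there is a genuine gap precisely at the step you yourself flag as ``the main obstacle'': neither of your proposed fixes (density-one generation from \cite{BGK}, or varying $s$ element by element) is needed, and the paper closes the gap with a much simpler device that you have overlooked.

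Do not first average over $G_1$ and then try to patch the complement with the trivial bound. Instead, use the bijection $y\mapsto ys$ of the \emph{entire} coset $gE$ (not just of $G_1$) to write
\[
2\sum_{y\in gE}\dim C_V(y)\;=\;\sum_{y\in gE}\bigl(\dim C_V(y)+\dim C_V(ys)\bigr).
\]
For $y\in G_1$ one has $\dim C_V(y)+\dim C_V(ys)\le \dim V-c$ with $c=\dim C_V(s)$, as you noted. The point you missed is that for $y$ in the bad set $Y'=gE\setminus G_1$, Scott's Lemma applied to the subgroup $H=\langle y,s\rangle$ (which need not be all of $G$) still yields a non-trivial inequality: since $C_V(H)\subseteq C_V(s)$ and $[s,V]\subseteq[H,V]$, both error terms $\dim C_V(H)$ and $\dim V/[H,V]$ are at most $c$, and one obtains
\[
\dim C_V(y)+\dim C_V(ys)\;\le\;\dim V+c \qquad (y\in Y').
\]
Summing over all of $gE$ gives
\[
2\sum_{y\in gE}\dim C_V(y)\;\le\;|G_1|(\dim V-c)+|Y'|(\dim V+c)\;=\;|E|\dim V-c\bigl(|G_1|-|Y'|\bigr)\;\le\;|E|\dim V,
\]
because $|G_1|>|E|/2>|Y'|$. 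Thus the single element $s$ supplied by Theorem~\ref{gen}, with success probability merely exceeding $1/2$, already suffices; no stronger generation statement and no variation of $s$ is required. The $s$-invariance of $G_1$ that you call the ``crucial structural observation'' is in fact irrelevant: what matters is that $y\mapsto ys$ is a bijection of the whole coset, together with the bound $\dim V + c$ on the bad set.
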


\begin{proof}
Let us consider a counterexample with $|G|$ and $\dim V$ minimal.
There is no loss of generality in assuming that $F$ is
algebraically closed, $G=\langle E, g \rangle$, and then assuming
that $V$ is an irreducible (hence absolutely irreducible) and
faithful $FG$-module. If $Z(E) \ne 1$, the result follows by Lemma
\ref{lem2} (by taking $N = Z(E)$ and noting that $Z(E)$ is
completely reducible on $V$ with $C_{V}(Z(E)) = 0$ (since $V$ is a
faithful $FG$-module)). So we may assume that $E$ is a direct
product of non-abelian simple groups. If $V$ is not a homogeneous
$FE$-module, then $g$ transitively permutes the homogeneous
components and so any element in $gE$ has fixed point space of
dimension at most $(1/2) \dim V$. So we may assume that $V$ is a
homogeneous $FE$-module. Thus $E=L_1 \times \ldots \times L_m$
with the $L_i$'s non-abelian simple groups. So $V$ is a direct sum
of say $t$ copies of $V_1 \otimes \ldots \otimes V_m$ where $V_i$
is an irreducible nontrivial $FL_i$-module. (Since $G/E$ is cyclic
and $V$ is irreducible, it follows that $t=1$ (by Clifford theory)
but we will not use this fact.) We may replace $E$ by a minimal
normal subgroup of $G$ contained in $E$ (the hypothesis on the
minimal normal subgroup will hold by Clifford's theorem) and so
assume that $g$ transitively permutes the isomorphic subgroups
$L_{1}, \ldots , L_{m}$.




Let $s \in L_1 \le E$ be chosen so that
$Y:=\{ y \in gE$  :  $\langle y, s \rangle = G\}$ has size larger
than $(1/2)|E|$. Such an element exists by Theorem \ref{gen}.
Set $c=\dim C_V(s)$. If $y \in Y$ then, by Lemma \ref{scott}
(applied to the triple $(y, s, (ys)^{-1})$), we have
 $$
 c + \dim C_V(y) + \dim C_V(ys) \le \dim V.
 $$
 For any $y \in Y':= gE \setminus Y$,  we at least have
 $$
 \dim C_V(y) + \dim C_V(ys) \le \dim V + c.
 $$
 Thus, $$2 \sum_{y \in gE} \dim C_{V}(y) = \sum_{y \in gE} \Big( \dim C_V(y) + \dim
 C_V(ys) \Big)$$
 is at most
 $$|Y| (\dim V -  c) + |Y'|(\dim V + c) < |E| \dim V.$$
 This gives $\mathrm{avgdim}(gE) \le (1/2) \dim V$ as required.
 \end{proof}

We now prove Theorem \ref{t1}. As usual, we may assume that $F$ is
algebraically closed, $V$ is an irreducible $FG$-module, and $N$
acts faithfully on $V$. Let $A$ be a minimal normal subgroup of
$G$ contained in $N$. Since $V$ is a faithful completely reducible
$FN$-module, $A$ has no trivial composition factor on $V$. Now
apply Lemma \ref{lem2} and Theorem \ref{main2} to conclude that
$\mathrm{avgdim}(Ag,V) \leq (1/p) \dim V$ where $p$ is the
smallest prime divisor of $|G|$. Since $Ng$ is the union of cosets
of $A$, the result follows.


\section{Proof of Corollary \ref{c5}}




Let us first prove the first statement of Corollary \ref{c5}. By
making the assumptions of the proof of \cite[Corollary D]{IKMM},
it is sufficient to show that the number of $g \in G$ such that
$\dim C_{V}(g) \leq (1/2) \dim V$ is at least
$$\frac{2|G|}{1 + \log_{p}{|G|}_{p}} \leq \frac{2|G|}{2+ \dim V}.$$
But this is clear by Theorem \ref{t1} noting that $\dim V$ is
even.



Let us prove the second statement of Corollary \ref{c5}. Use the
notations and assumptions of the last part of the proof of
\cite[Corollary D]{IKMM}. Let $H$ be a Hall $p'$-subgroup of $G$.
Since $V$ is a completely reducible $G$-module with $C_{V}(G) =
0$, the vector space $V$ is also a completely reducible $H$-module
with $C_{V}(H) = 0$. Hence applying Corollary \ref{c1} to the
$H$-module $V$ we get that there exists $g \in H$ with $\dim
C_{V}(g) < (1/2) \dim V$. So the last displayed inequality of the
proof of \cite[Corollary D]{IKMM} becomes
$$\frac{{|\mathrm{cl}_{G}(g)|}_{p}}{p} \geq \chi(1)^{1/3}$$ since
$\dim V$ is even. From this we get that $p^{3} \chi(1) \leq
{|\mathrm{cl}_{G}(g)|_{p}}^{3}$.

\section{Proof of Theorem \ref{t2}}


Note that $Y$ centralizes $M$ and so there is no loss in working
in $G/Y$ and assuming that $X=M$ is a minimal normal subgroup of
$G$. We can replace $G$ by $\langle M, g \rangle$ and so assume
that $g$ acts transitively on the direct factors of $M$.

What we need to show is that the arithmetic mean of the positive
integers $|C_M(x)|$ for $x \in gX$ is at most $|M|^{1/2}$. If
there are $t
> 1$ direct factors in $M$, then every element in $gM$ has
centralizer at most $|M|^{1/t} \le |M|^{1/2}$ and the result
follows. So assume that $t=1$ and $M$ is simple.

We compute the arithmetic mean of the positive integers $|C_M(x)|$
for $x \in gM$. All elements in a given $M$-conjugacy class in
$gM$ have the same centralizer size. If $h \in gM$, then the
$M$-conjugacy class of $h$ has $|M:C_M(h)|$ elements. Thus, we see
that the arithmetic mean is precisely the number of conjugacy
classes in $gM$. By \cite[Lemma 2.1]{FG}, this is at most $k(M)$, the number
of conjugacy classes in $M$, and again by \cite[Proposition 5.3]{FG}, this is at
most $|M|^{.41} < |M|^{1/2}$, whence the result.

\section{Proof of Theorem \ref{t3}}

Let us fix a chief series for a finite group $G$. Let
$\mathcal{N}$ be the set of non-central chief factors of this
series. Let $p$ be the smallest prime factor of the order of $G/F(G)$.
If $N \in \mathcal{N}$ is abelian then, by Theorem \ref{t1} (noting that
$F(G)$ acts trivially on $N$), we
have $\mathrm{geom}(G,N) \leq {|N|}^{1/p}$. If $N \in \mathcal{N}$
is non-abelian then, by Theorem \ref{t2} and the Feit-Thompson Odd
Order Theorem \cite{FT}, we again have $\mathrm{geom}(G,N) \leq
{|N|}^{1/p}$. Notice also that for any $g \in G$ we have the
inequality $|C_{G}(g)| \leq \mathrm{ccf}(G) \prod_{N \in
\mathcal{N}} |C_{N}(g)|$. From these observations Theorem \ref{t3}
already follows since
$$\mathrm{geom}(G,G) = {\Big( \prod_{g \in G} |C_{G}(g)| \Big)}^{1/|G|}
\leq \mathrm{ccf}(G) {\Big( \prod_{g \in G} \prod_{N \in
\mathcal{N}} |C_{N}(g)| \Big)}^{1/|G|} = $$
$$= \mathrm{ccf}(G) {\Big( \prod_{N \in \mathcal{N}} \prod_{g \in G}
|C_{N}(g)| \Big)}^{1/|G|} = \mathrm{ccf}(G) \Big( \prod_{N \in
\mathcal{N}} \mathrm{geom}(G,N) \Big) \leq$$
$$\leq \mathrm{ccf}(G) \Big( \prod_{N \in \mathcal{N}} {|N|}^{1/p} \Big)
= \mathrm{ccf}(G) \cdot {(\mathrm{ncf}(G))}^{1/p}.$$

\end{document}